\newtheorem{theorem}{Theorem}[section]
\newtheorem{lemma}[theorem]{Lemma}
\newtheorem*{mtheorem}{Main Theorem}
\theoremstyle{definition}
\newtheorem*{questions}{Questions}
\theoremstyle{remark} 
\newtheorem*{remark}{Remark}
\newcommand{\ov}{\overline}
\newcommand {\gi}{\mr{girth} \,}
\newcommand {\mr}{\mathrm}
\newcommand{\hthz}{\widehat{\Theta}}
\newcommand{\hth}[1]{\widehat{\Theta}_{#1}}
\newcommand{\hG}{\widehat{G}}
\newcommand{\hE}{\widehat{e}}
\newcommand{\htl}[1]{\widehat{l}_{#1}}
\newcommand{\phij}[2]{\varphi_{#1}^{#2}}
\newcommand{\qij}[2]{q_{#1}^{#2}}
\newcommand{\cW}[1]{\mathcal W^{#1}}
\begin{document}

\title{Residually finite non-exact groups}

\author{Damian Osajda}
\address{Instytut Matematyczny,
	Uniwersytet Wroc\l awski\\
	pl.\ Grun\-wal\-dzki 2/4,
	50--384 Wroc{\l}aw, Poland}
\address{Institute of Mathematics, Polish Academy of Sciences\\
	\'Sniadeckich 8, 00-656 War\-sza\-wa, Poland}
\address{Dept.\ of Math.\ \& Stats., McGill University\\ Montreal, Quebec, Canada H3A 0B9}
\email{dosaj@math.uni.wroc.pl}
\subjclass[2010]{{20F69, 20F06, 46B85}} \keywords{Group exactness, residual finiteness, graphical small cancellation}
\date{\today}

\begin{abstract}
We construct the first examples of residually finite non-exact groups. 
\end{abstract}

\maketitle

\section{Introduction}
\label{s:intro}

A finitely generated group is \emph{non-exact} if its reduced $C^{\ast}$--algebra is non-exact. 
Equivalently, it has no Guoliang Yu's property A (see e.g.\ \cite[Chapter 11.5]{Roe-book}).
Most classical groups are \emph{exact}, that is, are not non-exact. The first examples of non-exact
groups were the so-called \emph{Gromov monsters} \cite{Gromov2003}. In this paper we rely on author's construction of groups containing isometrically expanders \cite{Osajda-Monster}. The isometric embedding of an expanding family of graphs performed in the latter
construction is possible thanks to using a graphical small cancellation. That particular construction
is crucial for results in the current paper.
\begin{mtheorem}
	There exist finitely generated residually finite non-exact groups defined by infinite graphical small cancellation presentations.
\end{mtheorem}
This answers one of few questions from the Open Problems chapter of the Brown-Ozawa book \cite[Problem 10.4.6]{BrownOzawa-book}. Some motivations for the question can be found there.
Our interest in the problem is twofold: First, we plan to use residually finite non-exact groups 
constructed here for producing other, essentially new examples of non-exact groups; Second,
we believe that our examples might be useful for constructing and studying metric spaces with
interesting new coarse geometric features. More precisely, let $G$ be a finitely generated infinite residually finite group, and let $( N_i )_{i=1}^{\infty}$ be a sequence of its finite index normal subgroups with
$\bigcap_{i=1}^{\infty}N_i = \{ 1\}$. The \emph{box space} of $G$ corresponding to $(N_i)$ is  the coarse disjoint union $\bigsqcup_{i=1}^{\infty} G/N_i$, with each $G/N_i$ endowed with the word metric coming from a given finite generating set for $G$. Properties of the group $G$ are often related to coarse geometric properties of its box space. For example, a group is amenable
iff its box space has property A \cite[Proposition 11.39]{Roe-book}. Box spaces provide a powerful method for producing
metric spaces with interesting coarse geometric features (see e.g.\ \cite[Chapter 11.3]{Roe-book}).
The groups constructed in the current article open a way to studying  box spaces of non-exact groups and
make the following questions meaningful.
\begin{questions}
	What are coarse geometric properties of box spaces of non-exact groups? Can non-exactness of a group be characterized by coarse geometric properties of its box space?\footnote{After circulating the first version of the article I was informed that Thibault Pillon introduced a notion of ``fibred property A",
		and proved that a finitely generated residually finite group is exact iff its box space has this property (unpublished).}
\end{questions}

The idea of the construction of groups as in the Main Theorem is as follows. The group is defined by an infinite
graphical small cancellation presentation. It is a limit of a direct sequence of groups $G_i$ with surjective bonding maps -- each $G_i$ has a graphical small cancellation presentation being
a finite chunk of the infinite presentation. Such finite chunks are constructed inductively, using results of
\cite{Osajda-Monster}, so that they
satisfy the following conditions.
Each group $G_i$ is hyperbolic and acts geometrically on a CAT(0) cubical complex, hence it is residually finite.\footnote{Note that Pride~\cite{Pride1989} constructed infinitely presented classical small cancellation groups
	that are not residually finite. They are limits of hyperbolic CAT(0) cubical (hence residually finite) groups.} 
For every $i$, there exists a map $\phij{i}{}\colon G_i\to F_i$ to a finite group such that no nontrivial element of the $i$--ball around identity is mapped to $1$. Every $\phij{i}{}$ factors through the quotient maps $G_i\twoheadrightarrow G_j$ so that it induces a map of the limit group $G$ to a finite group injective on a large ball.
The residual finiteness of $G$ follows. Finally, $G$ is non-exact since its Cayley 
graph contains a sequence of graphs (relators) without property A. 

In Section~\ref{s:prel} we present preliminaries on graphical small cancellation presentations
and we recall some results from \cite{Osajda-Monster}. In Section~\ref{s:constr} we present the inductive construction of the infinite graphical small cancellation presentation proving the Main Theorem.
\medskip

\noindent
{\bf Acknowledgments.} I thank Ana Khukhro and Kang Li for suggesting the question.
I am grateful to them, and to Thibault Pillon for useful remarks. 
I thank Damian Sawicki for pointing out mistakes in the earlier version of the manuscript.
The author was partially supported by (Polish) Narodowe Centrum Nauki, grant no.\ UMO-2015/\-18/\-M/\-ST1/\-00050. The paper was written while visiting McGill University.
The author would like to thank the Department of Mathematics and Statistics of McGill University
for its hospitality during that stay.

\section{Preliminaries}
\label{s:prel}
We follow closely (up to the notation) \cite{Osajda-Monster}.

\subsection{Graphs}
\label{s:prelg}
All graphs considered in this paper are \emph{simplicial}, that is, they are undirected, have no
loops nor multiple edges. In particular, we will consider Cayley graphs of groups, denoted $\mr{Cay}(G,S)$ -- the Cayley graph of $G$ with respect to the generating set $S$.
For a set $S$, an $S$--\emph{labelling} of a graph $\Theta$ is the assignment
of elements of $S\cup S^{-1}$ ($S^{-1}$ being the set of formal inverses of elements of $S$) to directed edges (pairs of vertices) of $\Theta$, satisfying the following
condition: If $s$ is assigned to $(v,w)$ then $s^{-1}$ is assigned to $(w,v)$. All labellings considered in this paper are \emph{reduced}: If $s$ is assigned to $(v,w)$, and $s'$ is assigned to $(v,w')$ then
$s=s'$ iff $w=w'$. For a covering of graphs $p\colon \hthz \to \Theta$, having a labelling $(\Theta,l)$
we will always consider the \emph{induced} labelling $(\hthz,\widehat l)$: the label of an edge $e$
in $\hthz$ is the same as the label of $p(e)$.
Speaking about the metric on a connected graph $\Theta$ we mean the metric space $(\Theta^{(0)},d)$, where $\Theta^{(0)}$ is the set of vertices of $\Theta$ and $d$ is the path metric.
The \emph{ball} of radius $i$ around $v$ in $\Theta$ is $B_i(v,\Theta):= \{w\in \Theta^{(0)} \, | \, d(w,v) \leqslant i \}$. In particular, the metric on $\mr{Cay}(G,S)$ coincides with the word metric on $G$ given
by $S$.

\subsection{Graphical small cancellation}
\label{s:prelsc}
A \emph{graphical presentation} is the following data: $\mathcal{P}=\langle S \; | \; (\Theta,l)\rangle$,
where $S$ is a finite set -- the \emph{generating set}, and $(\Theta,l)$ is a graph $\Theta$ with
an $S$--labelling $l$. We assume that $\Theta$ is a disjoint (possibly infinite) union of finite connected
graphs without separating edges $(\Theta_{i})_{i\in I}$, and the labelling $l$ restricted to $\Theta_{i}$ is denoted by $l_i$.
We write $(\Theta,l)=(\Theta_i, l_i)_{i\in I}$.  
A graphical presentation $\mathcal{P}$ defines a group $G:=F(S)/R$, where $R$ is the normal closure 
in $F(S)$ of the subgroup generated by words in $S\cup S^{-1}$ read along (directed) loops in $\Theta$.

A \emph{piece} is a labelled path occurring in two distinct connected components $\Theta_i$ and $\Theta_j$,
or occurring in a single $\Theta_i$ in two places not differing by an automorphism of $(\Theta_i,l_i)$.
Observe that if $(\hth{i},\widehat l_{i})\to (\Theta_i,l_i)$ is a normal covering then two lifts of a path 
in $\Theta_i$ differ by a covering automorphism. In particular, if the covering corresponds to a characteristic subgroup of $\pi_1(\Theta_i)$ then a
lift of a non-piece is a non-piece.

For $\lambda \in (0,1/6]$, the labelling $(\Theta,l)$ or the presentation $\mathcal P$ are called
\emph{$C'(\lambda)$--small cancellation} if length of every piece appearing in $\Theta_i$ is strictly less
than $\lambda \mr{girth}(\Theta_i)$, where $\mr{girth}(\Theta_i)$ is the length of a shortest
simple cycle in $\Theta_i$. Such presentations define infinite groups. The introduction of graphical small cancellation is attributed to Gromov \cite{Gromov2003}. For more details see e.g.\
\cite{Wise-quasiconvex,Osajda-Monster}. We will use mostly results proven already
in \cite{Wise-quasiconvex,Osajda-Monster}, so we list only the most important features of groups defined
by graphical small cancellation presentations. 

First, observe that if $(\Theta,l)$ is a $C'(\lambda)$--small cancellation labelling, and $\hth{i} \to \Theta_i$ is a covering corresponding to a characteristic subgroup of $\pi_1(\Theta_i)$, for each $i$, then the induced
labelling $(\hthz,\widehat l)$ is also $C'(\lambda)$--small cancellation. The following result was first stated by Gromov.
\begin{lemma}[\cite{Gromov2003}]
	\label{l:Gromov}
	Let $G$ be the group defined by a graphical $C'(\lambda)$--small cancellation presentation $\mathcal P$, for $\lambda \in (0,1/6]$.
	Then, for every $i$, there is an isometric embedding $\Theta_i \to \mr{Cay}(G,S)$. 
\end{lemma}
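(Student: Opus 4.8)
The plan is to realize each $\Theta_i$ inside $\mathrm{Cay}(G,S)$ by a label-preserving map and then show that this map is distance-preserving, the hard direction being handled via van Kampen diagrams together with a Greendlinger-type structure theorem for reduced graphical $C'(\lambda)$-diagrams. First I would construct the natural map $\iota\colon \Theta_i \to \mathrm{Cay}(G,S)$: fix a basepoint $v_0\in\Theta_i$, send it to the identity, and send any vertex $w$ to the element of $G$ obtained by reading the label $l_i$ along an edge-path from $v_0$ to $w$. This is well defined because two such paths differ by loops, and every loop in $\Theta_i$ reads a word in $R$, hence trivial in $G$. Since $\iota$ carries each edge (labelled by some $s\in S\cup S^{-1}$) to an edge of $\mathrm{Cay}(G,S)$, it is $1$-Lipschitz, so $d_{\mathrm{Cay}}(\iota(v),\iota(w))\leqslant d_{\Theta_i}(v,w)$ for all $v,w$; it remains to establish the reverse inequality.

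Suppose for contradiction that $d_{\mathrm{Cay}}(\iota(v),\iota(w)) < d_{\Theta_i}(v,w)$ for some $v,w$. Let $p$ be a geodesic in $\Theta_i$ from $v$ to $w$ reading the reduced word $u$, and let $q$ be a geodesic in $\mathrm{Cay}(G,S)$ between the images reading the reduced word $u'$, with $|u'|<|u|$. Then $u(u')^{-1}=1$ in $G$, so there is a reduced van Kampen diagram $D$ over $\mathcal P$ whose boundary cycle reads $u(u')^{-1}$ and splits into an arc $\partial_1$ (the $p$-side, reading $u$) and an arc $\partial_2$ (the $q$-side, reading $(u')^{-1}$). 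Because $u$ and $u'$ are distinct reduced words, $D$ cannot be face-free, so $D$ has at least one $2$-cell. I would then invoke the Greendlinger lemma for graphical $C'(1/6)$-presentations (as in \cite{Osajda-Monster}): such a $D$ contains a face (a \emph{shell}) $f$ whose boundary is a cycle in some $\Theta_j$ with $|\partial f|\geqslant\mathrm{girth}(\Theta_j)$, meeting $\partial D$ in an exterior arc $a$ with $|a|>\tfrac12|\partial f|$ — the interior part of $\partial f$ being a concatenation of pieces, each shorter than $\lambda\,\mathrm{girth}(\Theta_j)$, so that the condition $\lambda\leqslant 1/6$ makes the exterior arc dominant. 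Passing to one of the (at least two) shells the lemma provides, I would arrange that $a$ lies entirely within $\partial_1$ or entirely within $\partial_2$, avoiding the two corners $\iota(v),\iota(w)$.

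The crux is the resulting case analysis. If $a\subseteq\partial_2$, then $a$ is a subword of the Cayley geodesic $u'$ that equals in $G$ the complementary sub-arc of $\partial f$, which is strictly shorter since $|a|>\tfrac12|\partial f|$; substituting it shortens $u'$, contradicting that $q$ is geodesic. If $a\subseteq\partial_1$, then the label of $a$ occurs both as a subpath of $p\subseteq\Theta_i$ and along the cycle $\partial f\subseteq\Theta_j$. Here the definition of pieces is decisive: since $|a|>\tfrac12\,\mathrm{girth}(\Theta_j)\geqslant\lambda\,\mathrm{girth}(\Theta_j)$, the arc $a$ is too long to be a piece, which forces $j=i$ and forces the two occurrences of $a$ to differ by an automorphism of $(\Theta_i,l_i)$. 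Applying that automorphism to $\partial f$ produces a genuine cycle in $\Theta_i$ containing the subpath $a$ of $p$; its complement is a path in $\Theta_i$ joining the endpoints of $a$ of length $<|a|$, and rerouting $p$ through it yields a strictly shorter path in $\Theta_i$ from $v$ to $w$, contradicting that $p$ is geodesic. Either way we reach a contradiction, so $\iota$ is isometric, and in particular injective.

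The step I expect to be the main obstacle is the implication $a\subseteq\partial_1\Rightarrow$ reroute within $\Theta_i$: this is precisely where the graphical (as opposed to classical) setting is essential, and it rests on the dichotomy ``non-piece $\Rightarrow$ the two occurrences differ by an automorphism'' coming from the definition of pieces, which is what lets an abstract cancellation be realized geometrically inside $\Theta_i$ itself. Secondary care is needed for the corner cases (when $a$ would meet $\iota(v)$ or $\iota(w)$) and for the degenerate shapes of $D$ (a single face, or a ladder), all of which the full Greendlinger classification recalled from \cite{Osajda-Monster} is designed to handle; I would lean on that reference rather than reprove the structure theorem here.
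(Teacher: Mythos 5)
Your argument is essentially correct, but note that the paper does not actually prove this lemma: it attributes the statement to \cite{Gromov2003} and leans on \cite{Osajda-Monster} (and, behind that, on the detailed treatments of Ollivier and of Gruber) for the proof, adding only the remark that the embedding is one of $S$--labelled graphs. What you have written is precisely the standard argument from those references: the label-reading map $\iota$ is well defined and $1$--Lipschitz, and the reverse inequality is extracted from a reduced van Kampen diagram via the graphical Greendlinger/Strebel classification, with the dichotomy ``a non-piece occurring in $\Theta_j$ and in $\Theta_i$ forces $j=i$ and an automorphism of $(\Theta_i,l_i)$ aligning the two occurrences'' doing the real work in the case $a\subseteq\partial_1$. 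That identification of the crux is exactly right, and your two-case analysis (shorten the Cayley geodesic, or reroute inside $\Theta_i$) is the correct skeleton. The one place where your sketch is genuinely thin is the reduction to a shell whose exterior arc avoids both corners: with only two boundary arcs, a ladder-shaped diagram has exactly two shells and each may straddle a corner, so ``pass to one of the at least two shells'' does not by itself suffice. This is repairable — either by treating single faces and ladders separately (as Strebel's classification sets up), or by observing that even the longer half of a straddling arc has length greater than $\tfrac14|\partial f|>\lambda\,\mr{girth}(\Theta_j)$ and hence is still not a piece, or by minimizing over all counterexamples — but it does require an argument, and you are right to defer it to the cited sources, where it is carried out in full. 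In short: your proposal supplies the proof the paper chose to omit, and modulo the corner/ladder bookkeeping it is sound.
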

The isometric embedding above is just an embedding of $S$--labelled graphs.

\subsection{Walls in graphs}
\label{s:walls}

A \emph{wall} in a connected graph is a collection  of edges such that removing all interiors of these
edges  decomposes the graph in exactly two connected components.
There are many ways for defining walls in finite graphs $\Theta_i$. We would like however that such walls
``extend" to walls in $\mr{Cay}(G,S)$. 
We use a particular construction of walls in finite graphs. For such graph $\Theta_i$, let $\hth{i}$ denote 
its $\mathbb{Z}_2$--homology cover, that is, a normal cover corresponding to the kernel of the obvious map
$\pi_1(\Theta_i,v_0) \to H_1(\Theta_i;\mathbb{Z}_2)$. Wise \cite{Wise-quasiconvex} observed that there is
a natural structure of walls on $\hth{i}$: for every edge of $\Theta_i$ its preimage is a wall in $\hth{i}$.
We call these walls \emph{$\mathbb{Z}_2$--homology cover walls}.
We will use results of Wise \cite{Wise-quasiconvex} to show that such walls, defined for $\hth{1},\ldots,\hth{i}$ give rise to walls in $\mr{Cay}(\hG,S)$, where 
$\hG$ is the group with the graphical presentation $\langle S \; | \; (\hth{1},\htl{1}),\ldots,(\hth{i},\htl{i}) \rangle$, for $\htl{j}$ being the labeling induced
from $l_j$. Furthermore, we show that $\hG$ acts geometrically on the associated CAT(0) cube complex.
We begin with a technical lemma. All the walls here are $\mathbb{Z}_2$--homology cover walls.

\begin{lemma}
	\label{l:l5.40}
	Let $\widehat{\gamma}$ be a geodesic in $\hth{i}$ whose first and last edges, $\hE_1$ and $\hE_2$, respectively, belong
	to walls $w_1$ and $w_2$. Suppose there is a wall $w$ such that one of its edges $\hE$ belongs to
	$\widehat{\gamma}$, and another edge $\hE'$ is $(\gi(\Theta_{i})/24)$--close to $\hE_2$. Then there is
	a wall $w'$ 	separating $\hE_1$ and $\hE_2$ but such that no edge of $w'$ is
	$(\gi(\Theta_{i})/24)$--close to an edge of $w_1$ or $w_2$.
\end{lemma}
\begin{proof}
	By definition $w_1$, $w_2$, and $w$ consist of preimages of edges $e_1, e_2$, and $e$ in $\Theta_i$, respectively.
	Since $\hE$ and $\hE'$ are in the same wall, but distinct, it follows that their distance is  
	at least $\gi(\Theta_{i})-1$. Hence the length of $\widehat{\gamma}$ is at least $(\gi(\Theta_{i})-1-\gi(\Theta_{i})/24)>\gi(\Theta_i)/2$. Consider a projection $\gamma$ of $\widehat{\gamma}$.
	It is a path (sequence of edges) of length (number of edges) at least 
	$\gi(\Theta_i)/2$, without back-tracks. Hence, there exists an edge
	$f$ in $\gamma$ that is not $(\gi(\Theta_{i})/24)$--close to $e_1$ or $e_2$ and that is 
	passed by $\gamma$ an odd number of times. The wall $w'$ defined by $f$, that is, the wall consisting of preimages
	of $f$ is the desired wall separating $\hE_1$ and $\hE_2$.  
\end{proof}

\begin{remark}
	The above lemma is needed to use \cite[Theorem 5.40 and Remark 5.41]{Wise-quasiconvex}\footnote{Here and everywhere we refer to the preprint version of \cite{Wise-quasiconvex} dated June 19, 2017.} in the sequel. Wise's work concerns the so-called \emph{cubical small cancellation}. It is a far-going generalization of the graphical small cancellation used in the current article (see e.g.\ \cite[3.s. Examples on p.\ 72 or Section 5.k.\ p.\ 124]{Wise-quasiconvex}). For example, in our case Wise's \emph{hyperplanes}
	reduce just to edges, and there exist only \emph{cone pieces}. Consequently, many assumptions appearing in 
	formulations of results in \cite{Wise-quasiconvex} are easily satisfied in the graphical small cancellation case.
\end{remark}

Let $\lambda \in (0,1/24]$, and let $(\Theta_1,l_1),\ldots,(\Theta_i,l_i)$ be a $C'(\lambda)$--small cancellation
labelling. Then we call the system of $\mathbb Z_2$--homology walls on $(\hth{1},\htl{1}),\ldots,$
$(\hth{i},\htl{i})$ a \emph{proper $\mathbb Z_2$--walling}.
The following lemma is our main tool for proving residual finiteness of intermediate steps in our construction.
\begin{lemma}
	\label{l:c0cc}
	Let $(\hth{1},\htl{1}),\ldots,
	(\hth{i},\htl{i})$ be equipped with a {proper $\mathbb Z_2$--walling}.
	Then the group $\hG=\langle S \; | \; (\hth{1},\htl{1}),\ldots,
	(\hth{i},\htl{i}) \rangle$ acts geometrically on a CAT(0) cubical complex.
\end{lemma}
\begin{proof}
	We use \cite[Theorem 5.40 and Remark 5.41]{Wise-quasiconvex} for proving that $\hG$ acts properly on a CAT(0) cubical complex. We verify that the graphical presentation $\mathcal P:=\langle S \; | \; (\hth{1},\htl{1}),\ldots,
	(\hth{i},\htl{i}) \rangle$ fulfills the conditions (1), (2), and (3) from Theorem 5.40 there.
	\medskip
	
	\noindent
	{\bf Condition (1) of \cite[Theorem 5.40]{Wise-quasiconvex}.} We have to show that $\mathcal P$ satisfies the \emph{generalized B(6) condition} of \cite[Definition 5.1]{Wise-quasiconvex} and has \emph{short innerpaths}. 
	
	\cite[Lemma 3.67]{Wise-quasiconvex} shows that $\mathcal P$ has short innerpaths.
	
	We now turn to \cite[Definition 5.1]{Wise-quasiconvex}. Points (1), (2), and (5) in this definition are obvious. For (3) and (4) observe that pieces in $\hth{j}$ have length at most $(\gi(\Theta_j)/24)$
	and distinct edges in the same wall in $\hth{j}$ are at distance at least $(\gi(\Theta_j)-1)$ (compare the proof of Lemma~\ref{l:l5.40} above).
	Hence, (3) and (4) follow as in \cite[Remark 5.2]{Wise-quasiconvex}.
	\medskip
	
	\noindent
	{\bf Condition (2) of \cite[Theorem 5.40]{Wise-quasiconvex}.} Observe that pieces in each $\hth{j}$ have length at most $(\gi(\Theta_j)/24)$.
	Hence the condition follows immediately from Lemma~\ref{l:l5.40}.
	\medskip
		
	\noindent
	{\bf Condition (3) of \cite[Theorem 5.40]{Wise-quasiconvex}.} This condition is trivially satisfied since every $\hth{j}$ is finite.
	\medskip
	
	Therefore, by \cite[Theorem 5.40 and Remark 5.41]{Wise-quasiconvex} we conclude that $\hG$ acts metrically properly on the associated CAT(0) cubical complex.
	
	Since $\hG$ is hyperbolic, the cocompactness follows e.g.\ from \cite[Lemma 7.2]{HruskaWise2014}.
\end{proof}

\section{The construction}
\label{s:constr}

Fix $\lambda \in (0,1/24]$ and a natural number $D\geqslant 3$. Let $(\Theta,l)=(\Theta_i,l_i)_{i=1}^{\infty}$ be a sequence of 
$D$--regular graphs 
with a labelling $l$ satisfying the following stronger version of $C'(\lambda)$--small cancellation:
every path in $\Theta_i$ of length greater or equal to $\lambda \mr{girth}(\Theta_i)$ has
labelling different from any other path. Such sequences are constructed in \cite{Osajda-Monster}.

We will construct a sequence $(\hthz,\widehat l)=(\hth{i},\widehat l_i)_{i=1}^{\infty}$ of normal covers of $(\Theta_i,l_i)_{i=1}^{\infty}$ with
the induced labelling $\widehat{l}$.
By $G_i$ we will denote the  finitely presented group given by the graphical presentation $G_i=\langle S \; | \; \hth{1}, \, \hth{2} , \ldots, \hth{i} \rangle$. 
The associated quotient maps will be $\qij{i}{j}\colon G_i \twoheadrightarrow G_j$, with $\qij{i}{i+1}$ denoted $q_i$. At the same time we will construct 
maps to finite groups $\phij{i}{j}\colon G_i \to F_j$, for $i\geqslant j$ with $\phij{i}{i}$ denoted $\phij{i}{}$. 
We will denote $G=\varinjlim (G_i,\qij{i}{j})$, with $\qij{i}{\infty} \colon G_i \to G$, and $\phij{\infty}{i} \colon G \to F_i$ being the induced maps. 

We require that the labelled graphs $(\hth{i},\widehat l_i)_{i=1}^{\infty}$, and the  maps $\phij{i}{j}\colon G_i \to F_j$ satisfy the following conditions:
\begin{enumerate}
	\item[{$(A)$}] $(\hth{1},\htl{1}),(\hth{2},\htl{2}),\ldots$ is a $C'(\lambda)$--small cancellation labelling;
	\item[{$(B)$}] $(\hth{1},\htl{1}),(\hth{2},\htl{2}),\ldots,(\hth{i},\htl{i})$
	admits a proper $\mathbb{Z}_2$--walling for every $i$;
	\item[{$(C)$}] $\phij{j}{}(g)\neq 1$, for every $j$ and every $g\in B_{j}(1,\mr{Cay}(G_{j},S))\setminus \{1\};$	
	\item[$(D)$] $\phij{l}{j}\circ \qij{k}{l}=\phij{k}{j}$, for all $j\leqslant k \leqslant l$.
\end{enumerate}
In particular, the diagram below is commutative.

\begin{figure}[h!]
	\centering
	\includegraphics[width=1\textwidth]{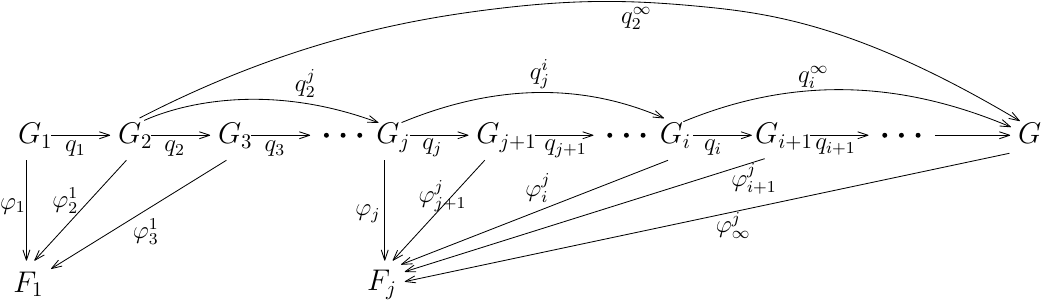}
\end{figure}

We construct the graphs $\hth{i}$, the finite groups $F_i$, and the maps $\phij{i}{j}$ ($j\leqslant i$) inductively,
with respect to $i$. 

\subsection{Induction basis}
\label{s:step0}
Let $\hth{1}$ be the $\mathbb{Z}_2$--homology cover of $\Theta_1$ (such a cover corresponds to a characteristic subgroup of $\pi_1(\Theta_1)$).
Then the following conditions are satisfied:
\begin{enumerate}
	\item[{$(A_1)$}] $(\hth{1},\htl{1}),(\Theta_{2},l_2), (\Theta_3,l_{3}), \ldots$ is a $C'(\lambda)$--small cancellation labelling;
	\item[{$(B_{1})$}] $(\hth{1},\htl{1})$
	admits a proper $\mathbb Z_2$--walling $\cW{1}$.
\end{enumerate}
Then $G_1:=\langle S \; | \; \hth{1} \rangle$ is hyperbolic and, by Lemma~\ref{l:c0cc},
acts geometrically on a CAT(0) cubical complex. Therefore, by results of 
Wise \cite{Wise-quasiconvex} and Agol \cite{Agol2013} it is residually finite.
Let $\phij{1}{}\colon G_1 \to F_1$ be a map into a finite group $F_1$ such that:
\begin{enumerate}
	\item[{$(C_1)$}] $\phij{1}{}(g)\neq 1$ for all $g\in B_1(1,\mr{Cay}(G_1,S))\setminus \{1\}$. 
\end{enumerate}

\subsection{Inductive step}
\label{s:step}
Assume that the graphs $\hth{1},\hth{2},\ldots,\hth{i}$,
the finite groups $F_1,F_2,\ldots,F_i$, and the maps $\phij{j}{k}\colon G_j \to F_k$, for $k\leqslant j \leqslant i$ with the following properties have been constructed.

\begin{enumerate}
	\item[{$(A_i)$}] $(\hth{1},\htl{1}),\ldots,(\hth{i},\htl{i}),(\Theta_{i+1},l_{i+1}), (\Theta_{i+2},l_{i+2}), \ldots$ is a $C'(\lambda)$--small cancellation labelling;
	\item[{$(B_{i})$}] $(\hth{1},\htl{1}),\ldots,(\hth{i},\htl{i})$
	admits a proper $\mathbb Z_2$--walling;
	\item[{$(C_i)$}] $\phij{j}{}(g)\neq 1$, for every $j\leqslant i$ and every $g\in B_{j}(1,\mr{Cay}(G_{j},S))\setminus \{1\};$	
	\item[$(D_i)$] $\phij{l}{j}\circ \qij{k}{l}=\phij{k}{j}$, for all $j\leqslant k \leqslant l \leqslant i$ (that is, the part of the above diagram with all indexes at most $i$ is commutative).
\end{enumerate}
Note that the condition $(D_1)$ is satisfied trivially.

Let $H_i$ be a subgroup of $G_i$ generated by (the images by $F(S)\twoheadrightarrow G_i$ of) all the words
read along cycles in $(\Theta_{i+1},l_{i+1})$. The subgroup $K_i:=\bigcap_{j\leqslant i}\ker(\phij{i}{j}) \lhd G_i$ is of finite index. Therefore $H_i\cap K_i < H_i$ is of finite index
and we can find a finite normal cover $\ov{\Theta}_{i+1}$ of $\Theta_{i+1}$ such that
the normal closure in $G_i$ of the subgroup generated by words read along $(\ov{\Theta}_{i+1},\ov l_{i+1})$ is contained in
$K_i$, where $\ov l_{i+1}$ is the labelling of $\ov{\Theta}_{i+1}$ induced by $l_{i+1}$ via
the covering map. 
Labelled paths of length greater or equal $\lambda \mr{girth}(\ov{\Theta}_{i+1})$ in $(\ov{\Theta}_{i+1},\ov l_{i+1})$ do not appear in
$(\hth{j},\widehat{l}_j)$ for $j\leqslant i$, neither in $(\Theta_{j},{l}_j)$ for $j\geqslant i+2$.
Any two such paths in $(\ov{\Theta}_{i+1},\ov l_{i+1})$ differ by a covering automorphism.
Therefore, $(\hth{1},\htl{1}),\ldots,(\hth{i},\htl{i}),(\ov{\Theta}_{i+1},\ov l_{i+1}),(\Theta_{i+2},l_{i+2}), (\Theta_{i+3},l_{i+3}), \ldots$ is a 
$C'(\lambda)$--small cancellation labelling.  Let  $\hth{i+1}$ be the $\mathbb Z_2$--homology cover of $\ov{\Theta}_{i+1}$. Then the following properties are satisfied:

\begin{enumerate}
	\item[{$(A_{i+1})$}] $(\hth{1},\htl{1}),\ldots,(\hth{i},\htl{i}),(\hth{i+1},\htl{i+1}),
	(\Theta_{i+2},l_{i+2}), (\Theta_{i+3},l_{i+3}), \ldots$ is \\ a $C'(\lambda)$--small cancellation labelling;
	\item[{$(B_{i+1})$}] $(\hth{1},\htl{1}),\ldots,(\hth{i},\htl{i}),(\hth{i+1},\htl{i+1})$
	admits a proper $\mathbb Z_2$--walling,
\end{enumerate}
where $q_i\colon G_i\twoheadrightarrow G_{i+1}:=\langle S \; |\;(\hth{1},\htl{1}),\ldots,(\hth{i+1},\htl{i+1})\rangle$ is the quotient map.
Observe that we have $\ker(q_i)< K_i$.

The group $G_{i+1}=\langle S \; | \; \hth{1}, \, \hth{2} , \ldots, \hth{i+1} \rangle$ is hyperbolic
and, by Lemma~\ref{l:c0cc}, acts geometrically on a CAT(0) cubical complex. Hence, by results of Wise \cite{Wise-quasiconvex} and Agol \cite{Agol2013}, it is residually finite.
Therefore,
we find a map $\varphi_{i+1}\colon G_{i+1}\to F_{i+1}$ into a finite group $F_{i+1}$ such that 
$\phij{i+1}{}(g)\neq 1$ for all $g\in B_{i+1}(1,\mr{Cay}(G_{i+1},S))$. Hence, by $(C_i)$, we have

\begin{enumerate}
		\item[{$(C_{i+1})$}] $\phij{j}{}(g)\neq 1$, for every $j\leqslant i+1$ and every $g\in B_{j}(1,\mr{Cay}(G_{j},S))\setminus \{1\}.$	
\end{enumerate}
For $j\leqslant i$ we define $\phij{i+1}{j}\colon G_{i+1}\to F_j$ as $\phij{i+1}{j}(\qij{j}{i+1}(g))=
\phij{j}{}(g)$. 
This is a well defined homomorphism: If $\qij{j}{i+1}(g)=\qij{j}{i+1}(g')$ then
$\qij{j}{i}(gg'^{-1})\in \ker(q_i)$, and hence
\begin{align*}
\phij{i+1}{j}(\qij{j}{i+1}(g))(\phij{i+1}{j}(\qij{j}{i+1}(g')))^{-1}&=
\phij{j}{}(g)\phij{j}{}(g')^{-1}=\\
&=\phij{j}{}(gg'^{-1})=\phij{i}{j}(\qij{j}{i}(gg'^{-1}))=1, 
\end{align*}
by $\ker(q_i)<K_i$.
By $(D_i)$ and the definition of $\phij{i+1}{j}$ we have:
\begin{enumerate}
	\item[{$(D_{i+1})$}] $\phij{l}{j}\circ \qij{k}{l}=\phij{k}{j}$, for all $j\leqslant k \leqslant l \leqslant i+1$ (that is, the part of the above diagram with all indexes at most $i+1$ is commutative).
\end{enumerate}
This finishes the inductive step.

\subsection{Proof of Main Theorem}
\label{s:proof}
The presentation $\langle S \; | \; \hth{1}, \, \hth{2} , \ldots \rangle$ is a graphical $C'(\lambda)$--small cancellation presentation, by $(A)$. Thus,
the Cayley graph $\mr{Cay}(G,S)$ contains isometrically embedded copies of all the graphs
$\hth{i}$, by Lemma~\ref{l:Gromov}. That is, $\mr{Cay}(G,S)$ contains a sequence of $D$--regular graphs of growing girth, and hence
$G$ is non-exact, by \cite{Willett2011}. 

We show now that $G$ is residually finite.
Take a non trivial element $g\in G$. Let $i$ be such an integer that $g\in B_{i}(1,\mr{Cay}(G,S))$.
Then there exists $g'\in G_i$ such that 
$\qij{i}{\infty}(g')=g$, and $g'\in B_{i}(1,\mr{Cay}(G_i,S))$.
For the homomorphism
$\phij{\infty}{i}\colon G \to F_i$ into the finite group $F_i$ we have 
$\phij{\infty}{i}(g)=\phij{\infty}{i}\circ \qij{i}{\infty}(g')=\phij{i}{}(g')\neq 1$, by $(D)$ and $(C)$.
This shows that $G$ is residually finite.


\bibliography{mybib}{}
\bibliographystyle{plain}

\end{document}